\documentclass[reqno,11pt]{amsart}

\usepackage{lineno,amsmath,amsfonts,amsthm,amssymb,graphics,epsfig,mathrsfs, eufrak}
\usepackage{enumerate,geometry,etoolbox, scalerel, stackengine, relsize}
\usepackage{amscd,latexsym,url,upgreek,mathtools,color}
\usepackage{mathrsfs}
\usepackage{hyperref}
\usepackage[english]{babel}

\usepackage[utf8]{inputenc}
\usepackage[T1]{fontenc}

\usepackage{tikz}
\usetikzlibrary{arrows.meta}
\usepackage{booktabs}

\stackMath
\newcommand\reallywidehat[1]{%
\savestack{\tmpbox}{\stretchto{%
  \scaleto{%
    \scalerel*[\widthof{\ensuremath{#1}}]{\kern-.6pt\bigwedge\kern-.6pt}%
    {\rule[-\textheight/2]{1ex}{\textheight}}
  }{\textheight}%
}{0.5ex}}%
\stackon[1pt]{#1}{\tmpbox}%
}

\newtheorem{theorem}{Theorem}[section]
\newtheorem{lemma}[theorem]{Lemma}

\theoremstyle{definition}
\newtheorem{remark}[theorem]{Remark}
\newtheorem{definition}[theorem]{Definition}

\title{Approximation of Fractals via Lagrange-type Superoscillations}

\author[F. Mantovani]{F. Mantovani}
\address{(FM) Politecnico di
Milano\\Dipartimento di Matematica\\Via E. Bonardi, 9\\20133 Milano\\Italy}
\email{francesco.mantovani@polimi.it}

\author[D. C. Struppa]{D. C. Struppa}
\address{(DCS) The Donald Bren Presidential Chair in Mathematics\\ Chapman University, Orange, CA 92866 \\ USA}
\email{struppa@chapman.edu}

\begin{document}

\maketitle

\begin{abstract}
We study the approximation of the Weierstrass  function
by means of superoscillating sequences.
Superoscillatory functions are band-limited functions whose local oscillation rate
can exceed the highest frequency contained in their Fourier spectrum.
Starting from Lagrange-type interpolation at nodes in $[-1,1]$,
we construct a double-indexed family $\mathcal{W}_{N,n}(x)$
that approximates the truncated Weierstrass function $W_N(x)$ for each
fixed truncation order~$N$.
We prove that if the number of interpolation nodes $n_N$ grows sufficiently fast
relative to the highest frequency $b^N\pi$,
namely $b^N\pi/n_N\to 0$,
then $\mathcal{W}_{N,n_N}$ converges uniformly to the full
Weierstrass function on every compact set.
We also show that the two limits in $N$ and $n$
do \emph{not} commute:
for any fixed~$n$ the series $\lim_{N\to\infty}\mathcal{W}_{N,n}(x)$
diverges for every $x\neq 0$, a phenomenon  called the Divergence Wall.

\end{abstract}

\medskip
\noindent\textbf{AMS Classification:} 35A20, 41A05, 28A80.

\medskip
\noindent\textbf{Keywords:} Superoscillations,
Weierstrass function,
Truncated Weierstrass function,
Lagrange interpolation,
Fractal approximation.

\section{Introduction}

\medskip

Superoscillations are band-limited functions that, on certain bounded regions of the real line, oscillate faster than the highest frequency component appearing in their Fourier representation.  This counterintuitive phenomenon was first identified in the context of quantum weak measurements by Aharonov, Albert, and Vaidman~\cite{AAV}, who showed that the weak value of a spin component can lie far outside the spectrum of the corresponding observable.
Berry later recognised the underlying mathematical mechanism as explained in his seminal paper \cite{berry1994}.
A systematic mathematical theory of superoscillations and of the related notion of \emph{supershifts} has
been developed in the monograph~\cite{colombo2017} and in a substantial number of subsequent works.

Beyond their important role in quantum mechanics, superoscillations have found
applications in super-resolution imaging, signal processing, and many other research areas both theoretical and applied.
We refer the reader to the comprehensive roadmap~\cite{berry2019roadmap} for a panoramic survey, and to~\cite{jordan,jordan2} for super-resolution techniques based on supergrowth and intensity contrast, and to~\cite{Lee} for direct constructions in signal processing.  Recent advances include the analysis of the persistence of superoscillations under quantum evolution~\cite{Pozzi,Ahar-bohm-JMP}, the construction of new families of superoscillating sequences~\cite{ACSSST21,kempf2004,kempf4}, and integral representations via complex Borel measures~\cite{BOREL}.

\medskip
A natural and deep question, raised by Berry and Morley-Short in~\cite{berry2016fractals},
 is whether continuous nowhere-differentiable fractal functions,
 which by their very nature require arbitrarily high frequencies
 for their description, can be represented as limits of superoscillating sequences whose individual
 terms involve only low frequencies.
 Such a representation would give the contrast between the low-frequency Fourier content
 of each approximant and the unbounded frequency content of the limit.

 \medskip

In an earlier paper~\cite{FRACTALS} a partial affirmative answer was provided for a specific
superoscillatory function arising from weak measurements, which belongs to the family of Bernstein-type superoscillations.
In the present paper we show that a different type of superoscillating sequences, called Lagrange-type superoscillations,
can also be used to approximate the Weierstrass function, with some significant differences in
the approximation rate with respect to Bernstein-type superoscillations.

\medskip
Precisely, in this work we give an affirmative answer to the  Berry-Morley-Short question for the classical Weierstrass function
\begin{equation}\label{eq:Weier}
   W(x) \;=\; \sum_{m=0}^{\infty}a^{m}\,e^{ib^{m}\pi x},\qquad 0<a<1,\;b>1,\;ab\geq 1,
\end{equation}
introduced by Weierstrass~\cite{weierstrass} as the first historical example of a function continuous everywhere on $\mathbb{R}$ yet differentiable nowhere, and later studied from a fractal-geometric viewpoint by Berry and Lewis~\cite{berry1980} and Mandelbrot~\cite{mandelbrot1982}.

Our strategy is to replace each high-frequency exponential $e^{ib^{m}\pi x}$ in the series~\eqref{eq:Weier} by a low-frequency superoscillating approximant $T_{n}(x;b^{m}\pi)$ built from nodes in $[-1,1]$, in the new Lagrange-type \cite{ACSSST21}.  The resulting double-indexed approximant $\mathcal{W}_{N,n}(x)$ depends on both the truncation order $N$ of the Weierstrass series and the interpolation order $n$ of the superoscillating approximant, and the central question is the joint behaviour as $N,n\to\infty$.

\medskip
Two types of families of superoscillating sequences have major differences.
Bernstein-type superoscillations whose classical construction, going back to~\cite{AAV} and analysed extensively in~\cite{colombo2017}, is built on the binomial-like identity
\[
   F_{n}(x,a) \;=\; \Bigl(\cos(x/n)+ia\,\sin(x/n)\Bigr)^{n} \;=\; \sum_{j=0}^{n}C_{j}(n,a)\,e^{i(1-2j/n)x},
\]
whose convergence $F_{n}(x,a)\to e^{iax}$ as $n\to\infty$ ultimately reflects the elementary limit $(1+iax/n)^{n}\to e^{iax}$.
Differentiating term by term, one obtains
\[
   F_{n}^{(p)}(0,a) \;=\; \sum_{j=0}^{n}C_{j}(n,a)\bigl(i(1-2j/n)\bigr)^{p} \;\xrightarrow[n\to\infty]{}\; (ia)^{p},\qquad p\in\mathbb{N}.
\]
The key feature is that the convergence of the derivatives at $x=0$ is achieved asymptotically:
 for any finite $n$, the value $F_{n}^{(p)}(0,a)$ differs from $(ia)^{p}$ by an error that vanishes only as $n$ grows.

While Lagrange-type superoscillations the new construction  proposed in \cite{ACSSST21} behaves differently.
In fact, given any sequence of frequencies $h_{j}(n)$ with $|h_{j}(n)|\leq 1$, one imposes the exact interpolation conditions
\[
   f_{n}^{(p)}(0) \;=\; (ia)^{p},\qquad p=0,1,\ldots,n,
\]
on the candidate $f_{n}(x)=\sum_{j=0}^{n}X_{j}(n)e^{ih_{j}(n)x}$.  The $n+1$ unknown coefficients $X_{j}(n)$ are then determined by the resulting linear system, whose solution is the Lagrange interpolation formula
\[
   X_{j}(n) \;=\; \prod_{\substack{k=0\\k\neq j}}^{n}\frac{h_{k}(n)-a}{h_{k}(n)-h_{j}(n)}.
\]
The Taylor polynomials of $f_{n}$ and of $e^{iax}$ at the origin therefore coincide up to order $n$, and the values $f_{n}^{(p)}(0)=(ia)^{p}$ are matched exactly for each finite $n$, with no limiting process.

 \medskip
{\it Main results.}
The results of the present paper can be summarised as follows.

We establish an explicit error estimate for the Lagrange-type superoscillating approximant $T_{n}(x;a)$ of $e^{iax}$ that exhibits
super-exponential decay in $n$ (Lemma~\ref{ErroLagrange}).
This sharp estimate is the crucial fact behind all subsequent convergence results.

We prove a global uniform approximation theorem for the partial sums of
$W$ by the corresponding double-indexed superoscillating approximant $\mathcal{W}_{N,n}$ (Theorem~\ref{thm:global_approx}).

We show that if the interpolation order $n$ is kept fixed while $N\to\infty$, then $\mathcal{W}_{N,n}$ diverges (Theorem~\ref{thm:divergence}).  This is a sharp obstruction: the joint scaling of $N$ and $n$ is essential.

Finally, we establish a joint convergence theorem: under a suitable growth condition $n=n_{N}$ relating the two indices,
the approximant $\mathcal{W}_{N,n_{N}}$ converges uniformly on bounded sets to the Weierstrass function $W$ (Theorem~\ref{thm:joint_conv}).

A structural difference from the corresponding result for Bernstein-type sequences is that
the required growth rate of $n_{N}$
does not depend on the parameter $a$ of the Weierstrass series,
reflecting the exact-matching property of Lagrange-type approximants.

Taken together, these results provide a rigorous and quantitative confirmation of the Berry--Morley-Short conjecture~\cite{berry2016fractals} for the Weierstrass function, in the framework of Lagrange-type superoscillations.

\section{Error Estimates for Lagrange-type Superoscillations}
\label{sec:lagrange}

In this section we establish the convergence of Lagrange-type superoscillating sequences.

\begin{lemma}[Refined uniform convergence estimate]\label{ErroLagrange}
Let $M > 0$ be fixed and let $\alpha\in\mathbb{R}$ with $|\alpha|>1$.  Let $\{h_{j}(n)\}_{j=0}^{n}$ be $n+1$ distinct nodes in the interval $[-1,1]$.  For each $\xi\in\mathbb{C}$ and $x\in\mathbb{R}$ define $f(\xi) := e^{i\xi x}$.  Let
\[
   T_{n}(x;\alpha) \;:=\; \sum_{j=0}^{n} f\bigl(h_{j}(n)\bigr)\,L_{j}(\alpha)\qquad\text{with}\qquad L_{j}(\alpha) \;:=\; \prod_{\substack{k=0\\k\neq j}}^{n}\frac{\alpha-h_{k}(n)}{h_{j}(n)-h_{k}(n)}.
\]
Then $T_{n}(\,\cdot\,;\alpha)$ is the value at $\alpha$ of the unique polynomial of degree at most $n$ that interpolates $f$ at the nodes $\{h_{j}(n)\}_{j=0}^{n}$.  For every $|x|\leq M$,
\begin{equation}\label{eq:precise_err}
   \bigl|T_{n}(x;\alpha) - e^{i\alpha x}\bigr|^{2} \;\leq\; \frac{M^{2(n+1)}}{[(n+1)!]^{2}}\,\prod_{j=0}^{n}\bigl(\alpha - h_{j}(n)\bigr)^{2}.
\end{equation}
Setting $K := M(|\alpha|+1)$, the bound implies
\begin{equation}\label{eq:lagrange_uniform_bound}
   \sup_{|x|\leq M}\bigl|T_{n}(x;\alpha) - e^{i\alpha x}\bigr|^{2} \;\leq\; \biggl(\frac{K^{n+1}}{(n+1)!}\biggr)^{\!2},
\end{equation}
so the convergence is uniform on $|x|\leq M$:
\begin{equation}\label{eq:lagrange_limit}
   \lim_{n\to\infty}\,\sup_{|x|\leq M}\bigl|T_{n}(x;\alpha) - e^{i\alpha x}\bigr|^{2} \;=\; 0.
\end{equation}
The decay is super-exponential: $K^{n+1}/(n+1)!\to 0$ faster than any inverse polynomial in~$n$.
\end{lemma}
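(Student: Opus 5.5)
The plan is to treat $T_n(x;\alpha)$ as the Lagrange interpolating polynomial of the entire function $f(\xi)=e^{i\xi x}$ in the variable $\xi$, with $x$ a fixed parameter, and then apply the classical interpolation remainder formula. The identification is immediate. Each $L_j(\xi)$ is a polynomial of degree $n$ in $\xi$ satisfying $L_j(h_k(n))=\delta_{jk}$, so $P_n(\xi):=\sum_j f(h_j(n))L_j(\xi)$ is the unique polynomial of degree at most $n$ interpolating $f$ at the distinct nodes. Uniqueness follows because a nonzero polynomial of degree at most $n$ cannot vanish at $n+1$ points. Hence $T_n(x;\alpha)=P_n(\alpha)$.

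For the error I would not use the real mean-value form of the remainder directly, since $f$ is complex-valued and the intermediate point $\eta$ need not be common to the real and imaginary parts. Instead I would use the Hermite--Genocchi representation of the divided difference,
\[
f(\alpha)-P_n(\alpha)=\prod_{j=0}^n(\alpha-h_j(n))\, f[h_0,\dots,h_n,\alpha],
\qquad
f[h_0,\dots,h_n,\alpha]=\int_{\Sigma_{n+1}} f^{(n+1)}\Bigl(\textstyle\sum_k t_k z_k\Bigr)\,dt ,
\]
where the $z_k$ run over the nodes and $\alpha$, and $\Sigma_{n+1}$ is the standard simplex of volume $1/(n+1)!$. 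Since $f^{(n+1)}(\xi)=(ix)^{n+1}e^{i\xi x}$ and the convex combinations are real, we have $|f^{(n+1)}|=|x|^{n+1}\le M^{n+1}$ on the segment. This gives $|T_n-e^{i\alpha x}|\le \frac{M^{n+1}}{(n+1)!}\prod_j|\alpha-h_j(n)|$, and squaring yields \eqref{eq:precise_err}. As an alternative, one could apply the real Lagrange remainder separately to $\cos(\xi x)$ and $\sin(\xi x)$. That gives two intermediate points $\eta_1,\eta_2$ and the bound $|\cos^{(n+1)}|^2+|\sin^{(n+1)}|^2\le 2|x|^{2(n+1)}$, which loses a factor $2$. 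The Hermite--Genocchi route avoids this loss, so this is the step that requires care.

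Next, since $h_j(n)\in[-1,1]$, each factor satisfies $|\alpha-h_j(n)|\le|\alpha|+1$. Therefore $M^{n+1}\prod_j|\alpha-h_j(n)|\le K^{n+1}$ with $K=M(|\alpha|+1)$, uniformly in $|x|\le M$, which is \eqref{eq:lagrange_uniform_bound}. Finally, $K^{n+1}/(n+1)!\to0$ because it is the general term of the convergent series for $e^K$, or by Stirling, $(eK/(n+1))^{n+1}$ decays super-exponentially. This gives \eqref{eq:lagrange_limit} and the claim that the decay is faster than any inverse polynomial. The hypothesis $|\alpha|>1$ is not needed for the estimate; it only signals the superoscillatory regime.
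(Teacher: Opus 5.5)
Your proposal is correct and follows essentially the same route as the paper: you identify $T_n(x;\alpha)$ with $P_n(\alpha)$, use the divided-difference remainder together with the Hermite--Genocchi formula (precisely to avoid the Rolle-based remainder, which fails for complex-valued $f$), bound $|f^{(n+1)}|\le M^{n+1}$ on real convex combinations, use the simplex volume $1/(n+1)!$, and bound $|\alpha-h_j(n)|\le|\alpha|+1$. The only difference is cosmetic: the paper shows $K^{n+1}/(n+1)!\to 0$ by a ratio test, while you use the exponential series or Stirling's bound $(eK/(n+1))^{n+1}$, which justifies the ``faster than any inverse polynomial'' claim somewhat more directly.
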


\begin{proof}
Throughout the proof we fix $x\in\mathbb{R}$ with $|x|\leq M$, and view $f$ as a function of the complex variable $\xi$.  The function $\xi\mapsto f(\xi)=e^{ix\xi}$ is entire on $\mathbb{C}$, and on the real line it is smooth (real-analytic); note however that $f$ is \emph{complex-valued}, which has a consequence in the proof.

For the $n+1$ distinct nodes $h_{0}(n),\dots,h_{n}(n)\in[-1,1]$, the Lagrange basis polynomials are
\[
   \ell_{j}(\xi) \;:=\; \prod_{\substack{k=0\\k\neq j}}^{n}\frac{\xi-h_{k}(n)}{h_{j}(n)-h_{k}(n)}\qquad(j=0,1,\dots,n),
\]
each of degree exactly $n$ and satisfying $\ell_{j}(h_{i}(n)) = \delta_{ij}$.  The Lagrange interpolating polynomial of $f$ at these nodes is
\begin{equation}\label{eq:Lagrange_poly}
   P_{n}(\xi) \;:=\; \sum_{j=0}^{n} f\bigl(h_{j}(n)\bigr)\,\ell_{j}(\xi),
\end{equation}
the unique polynomial of degree at most $n$ satisfying $P_{n}(h_{j}(n)) = f(h_{j}(n))$ for every $j$.  By definition, $T_{n}(x;\alpha) = P_{n}(\alpha)$, so the error in question is
\[
   e^{i\alpha x} - T_{n}(x;\alpha) \;=\; f(\alpha) - P_{n}(\alpha).
\]
Define the \emph{node polynomial}
\[
   \omega_{n+1}(\xi) \;:=\; \prod_{j=0}^{n}\bigl(\xi - h_{j}(n)\bigr),
\]
a monic polynomial of degree $n+1$ vanishing precisely at the interpolation nodes.

We recall the standard expression for the interpolation error in terms of divided differences.
For any point $\alpha\notin\{h_{0}(n),\dots,h_{n}(n)\}$,
\begin{equation}\label{eq:divdiff_remainder}
   f(\alpha) - P_{n}(\alpha) \;=\; f\bigl[h_{0}(n),h_{1}(n),\dots,h_{n}(n),\alpha\bigr]\,\omega_{n+1}(\alpha),
\end{equation}
where the bracket denotes the (Newton) divided difference of $f$ at the listed nodes.  This identity is purely algebraic and holds without any smoothness or real-valued assumption on $f$: it amounts to the recursive definition of divided differences combined with the Newton form of the interpolating polynomial.

We now use the Hermite--Genocchi formula, which expresses the divided difference of a $C^{n+1}$ function as an integral of its $(n+1)$-th derivative over a simplex.  Specifically, for any complex-valued $f\in C^{n+1}$ on a real interval containing the nodes $h_{0}(n),\dots,h_{n}(n)$ and the point $\alpha$,
\begin{equation}\label{eq:Hermite_Genocchi}
   f\bigl[h_{0}(n),\dots,h_{n}(n),\alpha\bigr] \;=\; \int_{\Delta_{n+1}} f^{(n+1)}\!\bigl(s_{0}\,h_{0}(n) + \cdots + s_{n}\,h_{n}(n) + s_{n+1}\,\alpha\bigr)\,d\sigma,
\end{equation}
where the integration is over the standard $(n+1)$-dimensional simplex
\[
   \Delta_{n+1} \;=\; \Bigl\{(s_{0},s_{1},\dots,s_{n+1})\in\mathbb{R}^{n+2}\;:\; s_{i}\geq 0,\;\sum_{i=0}^{n+1}s_{i}=1\Bigr\},
\]
and $d\sigma$ is the standard surface measure on $\Delta_{n+1}$ in the $(n+1)$ free variables $(s_{1},\dots,s_{n+1})$ with $s_{0}=1-\sum_{i=1}^{n+1}s_{i}$.  The total measure of $\Delta_{n+1}$ in this parametrization is
\begin{equation}\label{eq:simplex_volume}
   \int_{\Delta_{n+1}} d\sigma \;=\; \frac{1}{(n+1)!}.
\end{equation}

We emphasize that~\eqref{eq:Hermite_Genocchi} holds for complex-valued $f\in C^{n+1}$: the proof, by induction on $n$ and integration by parts, does not rely on the intermediate-value theorem or on Rolle's theorem (both of which fail for complex-valued functions of a real variable), but only on iterated integration by parts that is valid coordinate-wise on the real and imaginary parts of $f$ separately.

The function $f(\xi) = e^{ix\xi}$ is entire and its $(n+1)$-th derivative with respect to $\xi$ is
\[
   f^{(n+1)}(\xi) \;=\; (ix)^{n+1}\,e^{ix\xi}.
\]
For real $\xi$ and real $x$ we have $|e^{ix\xi}|=1$, hence
\begin{equation}\label{eq:derivative_bound}
   \bigl|f^{(n+1)}(\xi)\bigr| \;=\; |x|^{n+1} \;\leq\; M^{n+1}\qquad\text{for every }\xi\in\mathbb{R}.
\end{equation}
The argument $s_{0}\,h_{0}(n)+\cdots+s_{n}\,h_{n}(n)+s_{n+1}\,\alpha$ in~\eqref{eq:Hermite_Genocchi} is a convex combination of the real numbers $h_{0}(n),\dots,h_{n}(n),\alpha$, and is therefore real.  Hence~\eqref{eq:derivative_bound} applies to this argument, and the integrand in~\eqref{eq:Hermite_Genocchi} is bounded uniformly in modulus by $M^{n+1}$.

Combining~\eqref{eq:divdiff_remainder} with~\eqref{eq:Hermite_Genocchi}, taking moduli, and using~\eqref{eq:derivative_bound} together with~\eqref{eq:simplex_volume},
\begin{align}
   \bigl|f(\alpha) - P_{n}(\alpha)\bigr|
   &\;=\; \bigl|\omega_{n+1}(\alpha)\bigr|\cdot\Bigl|\int_{\Delta_{n+1}} f^{(n+1)}(\,\cdots\,)\,d\sigma\Bigr|\notag\\
   &\;\leq\; \bigl|\omega_{n+1}(\alpha)\bigr|\cdot\int_{\Delta_{n+1}}\bigl|f^{(n+1)}(\,\cdots\,)\bigr|\,d\sigma\notag\\
   &\;\leq\; \bigl|\omega_{n+1}(\alpha)\bigr|\cdot M^{n+1}\cdot\frac{1}{(n+1)!}\notag\\
   &\;=\; \frac{M^{n+1}}{(n+1)!}\,\prod_{j=0}^{n}\bigl|\alpha - h_{j}(n)\bigr|.\label{eq:pointwise_bound}
\end{align}
Squaring~\eqref{eq:pointwise_bound} and using $\bigl(\prod_{j}|\alpha-h_{j}(n)|\bigr)^{2} = \prod_{j}(\alpha - h_{j}(n))^{2}$ (each factor is real, so its absolute value squared is its square),
\begin{equation}\label{eq:pointwise_squared}
   \bigl|T_{n}(x;\alpha) - e^{i\alpha x}\bigr|^{2} \;\leq\; \frac{M^{2(n+1)}}{[(n+1)!]^{2}}\,\prod_{j=0}^{n}\bigl(\alpha - h_{j}(n)\bigr)^{2},
\end{equation}
which is the inequality~\eqref{eq:precise_err} of the lemma.

Since each node satisfies $h_{j}(n)\in[-1,1]$ and $\alpha\in\mathbb{R}$, we estimate $|\alpha - h_{j}(n)|$ as follows.  If $\alpha\geq 1$, then for every $j$,
\[
   0 \;<\; \alpha - 1 \;\leq\; \alpha - h_{j}(n) \;\leq\; \alpha + 1,
\]
hence $|\alpha - h_{j}(n)|\leq \alpha + 1 = |\alpha|+1$.  If $\alpha\leq -1$, the analogous chain of inequalities $\alpha + 1\leq\alpha - h_{j}(n)\leq\alpha - 1<0$ gives $|\alpha-h_{j}(n)|\leq |\alpha-1|=|\alpha|+1$.  Therefore, in both cases ($|\alpha|>1$ as assumed),
\begin{equation}\label{eq:node_distance_bound}
   \bigl|\alpha - h_{j}(n)\bigr| \;\leq\; |\alpha| + 1\qquad(j=0,1,\dots,n).
\end{equation}
Multiplying~\eqref{eq:node_distance_bound} over $j=0,\dots,n$,
\[
   \prod_{j=0}^{n}\bigl(\alpha - h_{j}(n)\bigr)^{2} \;=\; \biggl(\prod_{j=0}^{n}\bigl|\alpha - h_{j}(n)\bigr|\biggr)^{\!2} \;\leq\; \bigl(|\alpha|+1\bigr)^{2(n+1)}.
\]
Substituting into~\eqref{eq:pointwise_squared} and setting $K:=M(|\alpha|+1)$,
\[
   \bigl|T_{n}(x;\alpha) - e^{i\alpha x}\bigr|^{2} \;\leq\; \frac{M^{2(n+1)}\,(|\alpha|+1)^{2(n+1)}}{[(n+1)!]^{2}} \;=\; \biggl(\frac{K^{n+1}}{(n+1)!}\biggr)^{\!2}.
\]
Taking the supremum over $|x|\leq M$ on the left (the right-hand side is independent of $x$),
\begin{equation}\label{eq:uniform_bound_K}
   \sup_{|x|\leq M}\bigl|T_{n}(x;\alpha) - e^{i\alpha x}\bigr|^{2} \;\leq\; \biggl(\frac{K^{n+1}}{(n+1)!}\biggr)^{\!2},
\end{equation}
which is~\eqref{eq:lagrange_uniform_bound}.
n
It remains to show that $K^{n+1}/(n+1)!\to 0$ as $n\to\infty$ for any fixed $K>0$. We compute the limit of the ratio of consecutive terms and find that
\[
   \frac{K^{n+2}/(n+2)!}{K^{n+1}/(n+1)!} \;=\; \frac{K}{n+2} \longrightarrow 0,\qquad\text{as $n \to +\infty$}
\]
so, according to ratio test for sequences, the sequence $\{K^{n+1}/(n+1)!\}_{n}$ decays super-exponentially in $n$


Finally, from~\eqref{eq:uniform_bound_K},
\[
   \lim_{n\to\infty}\,\sup_{|x|\leq M}\bigl|T_{n}(x;\alpha) - e^{i\alpha x}\bigr|^{2} \;=\; 0,
\]
which is~\eqref{eq:lagrange_limit}.  This completes the proof.
\end{proof}

\begin{remark}[On the hypothesis $|\alpha|>1$]
The bound~\eqref{eq:precise_err} and the convergence~\eqref{eq:lagrange_limit} hold without the hypothesis $|\alpha|>1$; the proof above does not use it.  The hypothesis is included because the case $|\alpha|>1$ — where $\alpha$ lies \emph{outside} the interval $[-1,1]$ containing the nodes — is the genuinely superoscillatory regime that distinguishes the Lagrange-type construction from a classical interpolation problem.  For $|\alpha|\leq 1$ with $\alpha$ outside the node set, the same bound applies and reduces to the classical interpolation error estimate.
\end{remark}

\begin{remark}[On the role of complex-valuedness]
The standard pointwise Lagrange remainder formula
\[
   f(\alpha) - P_{n}(\alpha) \;=\; \frac{f^{(n+1)}(\zeta)}{(n+1)!}\,\omega_{n+1}(\alpha)\qquad\text{for some }\zeta\text{ in the convex hull of the nodes},
\]
is established for \emph{real-valued} $f$ via Rolle's theorem applied to a suitable auxiliary function.  This pointwise form does not extend to complex-valued $f$ such as $f(\xi)=e^{ix\xi}$: the value of $\zeta$ that works for the real part of $f$ generally differs from the value that works for the imaginary part.  The Hermite--Genocchi formula~\eqref{eq:Hermite_Genocchi}, which we used in the proof, is the natural replacement: it integrates over a simplex rather than evaluating at a single intermediate point, and it holds for complex-valued $C^{n+1}$ functions without modification.  The resulting bound~\eqref{eq:pointwise_bound} on the modulus is identical to what the pointwise form would give if it applied.
\end{remark}

\section{Superoscillating Approximation of the Weierstrass Function}
\label{sec:weierstrass}

We now apply the Lagrange-type superoscillating sequences to approximate the complex-valued truncated Weierstrass function.  The key idea is to replace each high-frequency exponential $e^{i\alpha_{m}x}$ (with $\alpha_{m}=b^{m}\pi$ potentially very large) by a superoscillating sequence $T_{n}(x;\alpha_{m})$ whose constituent frequencies all lie in $[-1,1]$.

\begin{definition}[Truncated Weierstrass function]\label{def:trunc_W}
Let $0<a<1$ and let $b$ be an odd integer with $ab > 1 + \tfrac{3}{2}\pi$.  For a fixed integer $N\in\mathbb{N}$, the \emph{truncated Weierstrass function} $W_{N}(x)$ is
\begin{equation}\label{eq:truncated_weierstrass}
   W_{N}(x) \;:=\; \sum_{m=0}^{N} a^{m}\,e^{i\alpha_{m}x},\qquad\alpha_{m} := b^{m}\pi.
\end{equation}
\end{definition}

\begin{remark}[On the hypotheses]
The classical conditions on $a,b$ above are inherited from Weierstrass's theorem on the nowhere-differentiable function $\sum_{m\geq 0}a^{m}\cos(b^{m}\pi x)$ and ensure that the limit $W_{\infty}(x)=\lim_{N\to\infty}W_{N}(x)$ defines a continuous, nowhere-differentiable function on $\mathbb{R}$.  For the approximation result of Theorem~\ref{thm:global_approx} below, only the geometric-series condition $0<a<1$ is actually used in the proof; the parity of $b$ and the precise value of the lower bound $1+3\pi/2$ play no role.  We state the full classical hypotheses for definiteness and to keep the link with the standard Weierstrass setting.
\end{remark}

\begin{definition}[Superoscillating approximation]\label{def:super_approx}
The \emph{superoscillating approximation of order $n$} of $W_{N}$ is obtained by replacing each exponential term in~\eqref{eq:truncated_weierstrass} by its Lagrange-type superoscillating sequence:
\begin{equation}\label{eq:W_Nn}
   \mathcal{W}_{N,n}(x) \;:=\; \sum_{m=0}^{N} a^{m}\,T_{n}(x;\alpha_{m}) \;=\; \sum_{m=0}^{N} a^{m}\sum_{j=0}^{n}\,L_{j}(\alpha_{m})\,e^{i\,h_{j}(n)\,x},
\end{equation}
where $T_{n}(x;\cdot)$ and $L_{j}$ are as in Section~\ref{sec:lagrange}, namely
\[
   T_{n}(x;\xi) \;=\; \sum_{j=0}^{n} e^{i\,h_{j}(n)\,x}\,L_{j}(\xi),\qquad L_{j}(\xi) \;=\; \prod_{\substack{k=0\\k\neq j}}^{n}\frac{\xi - h_{k}(n)}{h_{j}(n) - h_{k}(n)}.
\]
\end{definition}

Reordering the double sum in~\eqref{eq:W_Nn} by the frequency variable $j$,
\[
   \mathcal{W}_{N,n}(x) \;=\; \sum_{j=0}^{n}\,c_{N,n,j}\,e^{i\,h_{j}(n)\,x}\qquad\text{with}\qquad c_{N,n,j} \;:=\; \sum_{m=0}^{N} a^{m}\,L_{j}(\alpha_{m}).
\]
Hence $\mathcal{W}_{N,n}$ is a finite trigonometric sum whose frequencies all lie in the set
$$\{h_{0}(n),\dots,h_{n}(n)\}\subseteq[-1,1].$$  Despite this strict bandwidth limitation, the coefficients $c_{N,n,j}$ are arranged (through the Lagrange basis evaluated at the high frequencies $\alpha_{m}=b^{m}\pi$) so that the function approximates $W_{N}(x)$, whose own frequencies $\alpha_{m}$ grow exponentially.  This is the manifestation of the superoscillation phenomenon in the present setting: a bandlimited expression mimics, on a fixed compact set, a function with much higher frequency content, at the cost of having coefficients $c_{N,n,j}$ that can be very large in modulus.

\begin{theorem}[Global approximation error]\label{thm:global_approx}
Let $M>0$ and $N\in\mathbb{N}$ be fixed, and let $\alpha_{m}=b^{m}\pi$ for $m=0,\dots,N$.  Then
\begin{equation}\label{eq:global_error}
   \sup_{|x|\leq M}\bigl|W_{N}(x) - \mathcal{W}_{N,n}(x)\bigr| \;\leq\; \frac{M^{n+1}}{(n+1)!}\,\mathcal{S}_{N,n},
\end{equation}
where
\begin{equation}\label{eq:S_Nn}
   \mathcal{S}_{N,n} \;:=\; \sum_{m=0}^{N} a^{m}\,\prod_{j=0}^{n}\bigl|\alpha_{m} - h_{j}(n)\bigr|.
\end{equation}
For fixed $N$ and $M$, the error converges to zero as $n\to\infty$:
\begin{equation}\label{eq:weierstrass_limit}
   \lim_{n\to\infty}\,\sup_{|x|\leq M}\bigl|W_{N}(x) - \mathcal{W}_{N,n}(x)\bigr| \;=\; 0.
\end{equation}
More quantitatively, with $K_{N} := M(\alpha_{N}+1) = M(b^{N}\pi + 1)$,
\begin{equation}\label{eq:weierstrass_quantitative}
   \sup_{|x|\leq M}\bigl|W_{N}(x) - \mathcal{W}_{N,n}(x)\bigr| \;\leq\; \frac{1}{1-a}\cdot\frac{K_{N}^{n+1}}{(n+1)!}.
\end{equation}
\end{theorem}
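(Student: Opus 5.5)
The plan is to reduce everything to Lemma~\ref{ErroLagrange}, applied term by term, and then use the triangle inequality. By Definition~\ref{def:super_approx},
\[
   W_{N}(x)-\mathcal{W}_{N,n}(x) \;=\; \sum_{m=0}^{N} a^{m}\bigl(e^{i\alpha_{m}x}-T_{n}(x;\alpha_{m})\bigr),
\]
since both sums are finite and share the same index set. For each $m$ we take square roots in \eqref{eq:precise_err} with $\alpha=\alpha_{m}$. This gives, for $|x|\leq M$,
\[
   \bigl|T_{n}(x;\alpha_{m})-e^{i\alpha_{m}x}\bigr| \;\leq\; \frac{M^{n+1}}{(n+1)!}\prod_{j=0}^{n}\bigl|\alpha_{m}-h_{j}(n)\bigr|.
\]
The hypothesis $|\alpha|>1$ is harmless here: by the remark following the lemma it is not actually needed, and in any case $\alpha_{m}=b^{m}\pi\geq\pi>1$. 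Multiplying by $a^{m}>0$, summing over $m$, and using the triangle inequality yields \eqref{eq:global_error} with $\mathcal{S}_{N,n}$ as in \eqref{eq:S_Nn}. The right-hand side does not depend on $x$, so the supremum over $|x|\leq M$ is immediate.

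For the quantitative bound \eqref{eq:weierstrass_quantitative}, we use the node estimate \eqref{eq:node_distance_bound}: $|\alpha_{m}-h_{j}(n)|\leq\alpha_{m}+1$. Since $b>1$, the sequence $\alpha_{m}$ is increasing in $m$, so this is at most $\alpha_{N}+1$ for all $m\leq N$. Hence
\[
   \mathcal{S}_{N,n}\leq(\alpha_{N}+1)^{n+1}\sum_{m=0}^{N}a^{m}\leq\frac{(\alpha_{N}+1)^{n+1}}{1-a},
\]
where the last step is the geometric series bound, valid because $0<a<1$. Combining this with the factor $M^{n+1}/(n+1)!$ gives exactly $\frac{1}{1-a}\,K_{N}^{n+1}/(n+1)!$ with $K_{N}=M(\alpha_{N}+1)$.

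Finally, \eqref{eq:weierstrass_limit} follows because, for fixed $N$ and $M$, the constant $K_{N}$ is fixed. The ratio argument at the end of the proof of Lemma~\ref{ErroLagrange} then shows $K_{N}^{n+1}/(n+1)!\to0$ as $n\to\infty$. No step is a genuine obstacle. The only points needing care are uniformity, which holds because all bounds are independent of $x$, and the fact that the nodes $h_{j}(n)$ change with $n$, which does not matter because the bound \eqref{eq:node_distance_bound} uses only $h_{j}(n)\in[-1,1]$.
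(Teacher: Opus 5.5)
Your proposal is correct and follows the paper's proof essentially step for step: the triangle inequality over the $N+1$ terms, Lemma~\ref{ErroLagrange} with $\alpha=\alpha_{m}$ to produce $\mathcal{S}_{N,n}$, the node bound $|\alpha_{m}-h_{j}(n)|\leq\alpha_{m}+1\leq\alpha_{N}+1$, the geometric series, and the ratio test for $K_{N}^{n+1}/(n+1)!$. The only cosmetic difference is that you take square roots in \eqref{eq:precise_err} directly, whereas the paper first writes the unsquared bound with $|x|^{n+1}$ and then replaces it by $M^{n+1}$.
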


\begin{proof}
Throughout the proof, fix $x\in\mathbb{R}$ with $|x|\leq M$. By the definitions of $W_{N}$ and $\mathcal{W}_{N,n}$ in~\eqref{eq:truncated_weierstrass} and~\eqref{eq:W_Nn}, the difference is the linear combination of the single-exponential errors:
\[
   W_{N}(x) - \mathcal{W}_{N,n}(x) \;=\; \sum_{m=0}^{N} a^{m}\bigl(e^{i\alpha_{m}x} - T_{n}(x;\alpha_{m})\bigr).
\]
Taking moduli and applying the triangle inequality together with $a^{m}>0$,
\begin{equation}\label{eq:triangle_WN}
   \bigl|W_{N}(x) - \mathcal{W}_{N,n}(x)\bigr| \;\leq\; \sum_{m=0}^{N} a^{m}\,\bigl|e^{i\alpha_{m}x} - T_{n}(x;\alpha_{m})\bigr|.
\end{equation}

For each $m\in\{0,1,\dots,N\}$, the function $\xi\mapsto e^{i\xi x}$ is entire on $\mathbb{C}$.  Applying Lemma~\ref{ErroLagrange} (the Lagrange-type superoscillation estimate) to this function evaluated at $\xi=\alpha_{m}$ — that is, taking $\alpha:=\alpha_{m}$ in the lemma — we obtain
\begin{equation}\label{eq:each_m_bound}
   \bigl|e^{i\alpha_{m}x} - T_{n}(x;\alpha_{m})\bigr| \;\leq\; \frac{|x|^{n+1}}{(n+1)!}\,\prod_{j=0}^{n}\bigl|\alpha_{m} - h_{j}(n)\bigr|.
\end{equation}
Using $|x|\leq M$,
\begin{equation}\label{eq:each_m_bound_M}
   \bigl|e^{i\alpha_{m}x} - T_{n}(x;\alpha_{m})\bigr| \;\leq\; \frac{M^{n+1}}{(n+1)!}\,\prod_{j=0}^{n}\bigl|\alpha_{m} - h_{j}(n)\bigr|.
\end{equation}

The factor $M^{n+1}/(n+1)!$ on the right-hand side of~\eqref{eq:each_m_bound_M} is independent of $m$.

Substituting~\eqref{eq:each_m_bound_M} into~\eqref{eq:triangle_WN},
\[
   \bigl|W_{N}(x) - \mathcal{W}_{N,n}(x)\bigr| \;\leq\; \sum_{m=0}^{N} a^{m}\cdot\frac{M^{n+1}}{(n+1)!}\,\prod_{j=0}^{n}\bigl|\alpha_{m} - h_{j}(n)\bigr| \;=\; \frac{M^{n+1}}{(n+1)!}\,\mathcal{S}_{N,n},
\]
where $\mathcal{S}_{N,n}$ is given by~\eqref{eq:S_Nn}.  Since the right-hand side is independent of $x$ and the left-hand side is bounded by it for every $|x|\leq M$, taking the supremum over $|x|\leq M$ on the left yields~\eqref{eq:global_error}. For each $m\in\{0,\dots,N\}$ and each $j\in\{0,\dots,n\}$, the node $h_{j}(n)$ lies in $[-1,1]$ and $\alpha_{m}=b^{m}\pi\geq\pi>1$.  Hence
\[
   0 \;<\; \alpha_{m} - 1 \;\leq\; \alpha_{m} - h_{j}(n) \;\leq\; \alpha_{m} + 1,
\]
so $|\alpha_{m}-h_{j}(n)|\leq\alpha_{m}+1$.  Multiplying over $j=0,\dots,n$,
\begin{equation}\label{eq:product_bound}
   \prod_{j=0}^{n}\bigl|\alpha_{m} - h_{j}(n)\bigr| \;\leq\; (\alpha_{m}+1)^{n+1}.
\end{equation}

Since $b\geq 3>1$, the sequence $\{\alpha_{m}\}_{m=0}^{N}=\{b^{m}\pi\}_{m=0}^{N}$ is strictly increasing in $m$, and in particular $\alpha_{m}\leq\alpha_{N}$ for every $m\leq N$.  Therefore
\[
   (\alpha_{m}+1)^{n+1} \;\leq\; (\alpha_{N}+1)^{n+1}\qquad\text{for every }m=0,\dots,N.
\]
Substituting into~\eqref{eq:S_Nn} and summing the geometric series,
\begin{equation}\label{eq:S_Nn_bound}
   \mathcal{S}_{N,n} \;\leq\; (\alpha_{N}+1)^{n+1}\,\sum_{m=0}^{N}a^{m} \;\leq\; (\alpha_{N}+1)^{n+1}\cdot\frac{1}{1-a},
\end{equation}
the last inequality using $\sum_{m=0}^{N}a^{m}=\frac{1-a^{N+1}}{1-a}\leq\frac{1}{1-a}$ for $0<a<1$. Combining~\eqref{eq:global_error} with~\eqref{eq:S_Nn_bound} and setting $K_{N}:=M(\alpha_{N}+1)$,
\begin{align*}
   \sup_{|x|\leq M}\bigl|W_{N}(x) - \mathcal{W}_{N,n}(x)\bigr|
   &\;\leq\; \frac{M^{n+1}}{(n+1)!}\cdot\frac{(\alpha_{N}+1)^{n+1}}{1-a}\\[2pt]
   &\;=\; \frac{1}{1-a}\cdot\frac{\bigl(M(\alpha_{N}+1)\bigr)^{n+1}}{(n+1)!}\\[2pt]
   &\;=\; \frac{1}{1-a}\cdot\frac{K_{N}^{n+1}}{(n+1)!},
\end{align*}
proving~\eqref{eq:weierstrass_quantitative}.

To deduce~\eqref{eq:weierstrass_limit}, observe that $K_{N}$ is finite and independent of $n$,   the ratio of consecutive terms of the sequence $\{K_{N}^{n+1}/(n+1)!\}_{n}$ is such that
\[
   \frac{K_{N}^{n+2}/(n+2)!}{K_{N}^{n+1}/(n+1)!} \;=\; \frac{K_{N}}{n+2} \longrightarrow 0, \qquad \text{as $n \to \infty$,}
\]
so the sequence decays super-exponentially in $n$ according to the ratio test for sequences. Hence it follows
\[
   \lim_{n\to\infty}\,\sup_{|x|\leq M}\bigl|W_{N}(x) - \mathcal{W}_{N,n}(x)\bigr| \;=\; 0,
\]
proving~\eqref{eq:weierstrass_limit}.  This completes the proof.
\end{proof}

\section{Divergence, Iterated Limits, and Joint Convergence}
\label{sec:limits}

We now study the behavior of $\mathcal{W}_{N,n}(x)$ as $N$ and $n$ vary independently or jointly.  The results reveal a fundamental non-commutativity: while taking $n\to\infty$ first (for each fixed $N$) and then $N\to\infty$ recovers the Weierstrass function, the reverse order of limits diverges in a precise sense.

\subsection{Divergence for fixed $n$}

\begin{theorem}[Divergence of the fixed-$n$ superoscillating series]\label{thm:divergence}
Let $n\geq 1$ be a fixed integer and let $M>0$.  Let $a,b\in\mathbb{R}$ satisfy
\begin{equation}\label{eq:hyp_div}
   0 < a < 1,\qquad b > 1,\qquad ab > 1,
\end{equation}
and let $\{h_{j}(n)\}_{j=0}^{n}\subset[-1,1]$ be $n+1$ distinct nodes.  Consider the partial sums
\begin{equation}\label{eq:W_Nn_coeffs}
   \mathcal{W}_{N,n}(x) \;=\; \sum_{m=0}^{N} a^{m}\,T_{n}(x;b^{m}\pi) \;=\; \sum_{m=0}^{N} a^{m}\sum_{j=0}^{n} L_{j}(b^{m}\pi)\,e^{i\,h_{j}(n)\,x},
\end{equation}
where $L_{j}(\xi) := \prod_{k\neq j}\frac{\xi - h_{k}(n)}{h_{j}(n) - h_{k}(n)}$ are the Lagrange basis polynomials at the nodes $\{h_{j}(n)\}_{j=0}^{n}$.  Then:
\begin{enumerate}[\rm(i)]
   \item At $x = 0$ the series converges absolutely:
   \[
      \lim_{N\to\infty}\mathcal{W}_{N,n}(0) \;=\; \frac{1}{1-a}.
   \]
   \item There exists a discrete subset $\mathcal{Z}_{n}\subset\mathbb{R}$ containing $0$ such that, for every $x\in[-M,M]\setminus\mathcal{Z}_{n}$,
   \[
      \bigl|\mathcal{W}_{N,n}(x)\bigr| \;\xrightarrow[N\to\infty]{}\; +\infty.
   \]
\end{enumerate}
\end{theorem}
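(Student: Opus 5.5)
The plan is to exploit the fact that, for fixed $x$, the map $\xi\mapsto T_{n}(x;\xi)=\sum_{j=0}^{n}e^{ih_{j}(n)x}L_{j}(\xi)$ is a polynomial in $\xi$ of degree at most $n$. Evaluating this polynomial along the geometric sequence $\xi=b^{m}\pi$ turns $\mathcal{W}_{N,n}(x)$ into a finite linear combination of geometric sums whose ratios are $ab^{k}$.

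\textbf{Part (i).} At $x=0$ every exponential equals $1$. Hence $T_{n}(0;\xi)=\sum_{j}L_{j}(\xi)$ is the degree-$\le n$ polynomial interpolating the constant $1$ at the nodes, so it is identically $1$. Therefore $\mathcal{W}_{N,n}(0)=\sum_{m=0}^{N}a^{m}$, which tends to $1/(1-a)$ because $0<a<1$.

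\textbf{Part (ii), expansion step.} Write the polynomial as
\[
T_{n}(x;\xi)=\sum_{k=0}^{n}c_{k}(x)\,\xi^{k},
\]
where each $c_{k}(x)$ is a finite combination of $e^{ih_{j}(n)x}$ and hence entire in $x$. Then
\[
\mathcal{W}_{N,n}(x)=\sum_{k=0}^{n}c_{k}(x)\,\pi^{k}\sum_{m=0}^{N}(ab^{k})^{m}.
\]
Since $b>1$ and $n\ge1$, the top ratio satisfies $ab^{n}\ge ab>1$. So the $k=n$ sum equals $\frac{(ab^{n})^{N+1}-1}{ab^{n}-1}$, which is of exact order $(ab^{n})^{N}$. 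Every lower sum with $k<n$ is $O\bigl(N\max(1,ab^{n-1})^{N}\bigr)$, which is $o\bigl((ab^{n})^{N}\bigr)$ because $b>1$. Consequently, whenever $c_{n}(x)\neq0$, dividing by $(ab^{n})^{N}$ gives $|\mathcal{W}_{N,n}(x)|\to+\infty$. We then define $\mathcal{Z}_{n}:=\{x\in\mathbb{R}: c_{n}(x)=0\}$.

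\textbf{Part (ii), the zero set.} It remains to show that $\mathcal{Z}_{n}$ contains $0$ and is discrete; this is the only genuinely delicate point. The leading coefficient of the Lagrange interpolant is the divided difference
\[
c_{n}(x)=f[h_{0}(n),\dots,h_{n}(n)],\qquad f(\xi)=e^{ix\xi}.
\]
By the Hermite--Genocchi formula used in the proof of Lemma~\ref{ErroLagrange} (now with $n$ in place of $n+1$), this gives
\[
c_{n}(x)=(ix)^{n}\int_{\Delta_{n}}e^{ix(s_{0}h_{0}(n)+\cdots+s_{n}h_{n}(n))}\,d\sigma=:(ix)^{n}g(x).
\]
\begin{itemize}
\item The function $g$ is entire in $x$, since it is an integral of an entire function over a compact set.
\item $g(0)=\operatorname{vol}(\Delta_{n})=1/n!\neq0$.
\item Hence $c_{n}$ is an entire function that is not identically zero. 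Its real zeros form a discrete set, namely $\{0\}$ together with the zeros of $g$.
\item Because $n\ge1$, we have $c_{n}(0)=0$, so $0\in\mathcal{Z}_{n}$. This is consistent with part (i): there the polynomial is constant and the series converges.
\end{itemize}

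Any compact interval $[-M,M]$ contains only finitely many points of $\mathcal{Z}_{n}$, and for every other $x$ in the interval the dominance argument above gives divergence. The main obstacle is justifying that the leading coefficient is not identically zero, and that is resolved by the analyticity and normalization of $g$.
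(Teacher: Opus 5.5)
Your proof is correct and follows the same route as the paper. Part (i) uses $\sum_j L_j\equiv 1$, and part (ii) expands $T_n(x;\xi)$ in powers of $\xi$, identifies the top coefficient as the divided difference $(ix)^n\int_{\Delta_n}e^{ix\langle s,h\rangle}\,d\sigma$ via Hermite--Genocchi, and takes $\mathcal{Z}_n$ to be its zero set. The only difference is the last step. You interchange the sums and evaluate each geometric sum $\sum_{m\le N}(ab^k)^m$ exactly, which gives a genuine lower bound on $|\mathcal{W}_{N,n}(x)|$. The paper instead uses termwise asymptotics $a^mT_n(x;b^m\pi)\sim(ab^n)^m\pi^n\Phi(x)$ and passes to the partial sums via a domination remark that only bounds them from above, so your version of that step is tighter.
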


\begin{remark}[On the hypotheses]
Three remarks on~\eqref{eq:hyp_div}.
\begin{enumerate}[\rm(a)]
   \item The condition $0<a<1$ is needed for the geometric series at $x=0$ in part~(i) to converge.
   \item The condition $ab > 1$ is essential for the divergence in part~(ii): it ensures that $ab^{n}>1$ (since $b>1$ implies $b^{n-1}\geq 1$ for $n\geq 1$), which is what makes the leading-order term grow.  Under the classical Weierstrass hypothesis $ab > 1+\tfrac{3\pi}{2}$ used in Section~\ref{sec:weierstrass}, this is automatically satisfied.
   \item The set $\mathcal{Z}_{n}$ in part~(ii) is the zero set of an explicit trigonometric polynomial $\Phi$ defined in the proof; for generic node configurations and generic $x$, $\mathcal{Z}_{n}$ contains only the isolated point $0$ inside any compact interval.  We give the precise definition in the proof.
\end{enumerate}
\end{remark}

\begin{proof}
We treat the two parts separately.

\medskip
{\em Point (i): Convergence at $x = 0$.}\;
At $x=0$, every exponential evaluates to $1$, so
\[
   T_{n}(0;b^{m}\pi) \;=\; \sum_{j=0}^{n} L_{j}(b^{m}\pi)\cdot 1 \;=\; \sum_{j=0}^{n} L_{j}(b^{m}\pi).
\]
By the fundamental property of Lagrange interpolation, the constant function $\xi\mapsto 1$ is its own Lagrange interpolant at any node set, hence
\[
   \sum_{j=0}^{n} L_{j}(\xi) \;\equiv\; 1\qquad\text{for every }\xi\in\mathbb{R}.
\]
Therefore $T_{n}(0;b^{m}\pi) = 1$ for every $m\geq 0$, and
\[
   \mathcal{W}_{N,n}(0) \;=\; \sum_{m=0}^{N} a^{m}\cdot 1 \;=\; \frac{1-a^{N+1}}{1-a} \;\xrightarrow[N\to\infty]{}\; \frac{1}{1-a},
\]
where the limit uses $0<a<1$.  This proves~(i).

\medskip
{\em Point (ii): Divergence.}\;
The strategy is to identify the leading-order asymptotic behavior of $T_{n}(x;\alpha)$ as $\alpha\to\infty$, and to show that this leading term, when amplified by $a^{m}$ with $\alpha=b^{m}\pi\to\infty$, grows exponentially.

Each Lagrange basis polynomial $L_{j}(\alpha)$ is a monic-like polynomial of degree exactly $n$ in $\alpha$, with explicit form
\[
   L_{j}(\alpha) \;=\; \prod_{\substack{k=0\\k\neq j}}^{n}\frac{\alpha - h_{k}(n)}{h_{j}(n) - h_{k}(n)} \;=\; \frac{1}{\omega'(h_{j}(n))}\,\prod_{\substack{k=0\\k\neq j}}^{n}\bigl(\alpha - h_{k}(n)\bigr),
\]
where $\omega(\xi) := \prod_{j=0}^{n}(\xi - h_{j}(n))$ is the node polynomial and $\omega'(h_{j}(n)) = \prod_{k\neq j}(h_{j}(n) - h_{k}(n))$.  Expanding $L_{j}(\alpha)$ in powers of $\alpha$,
\begin{equation}\label{eq:Lj_expansion}
   L_{j}(\alpha) \;=\; \sum_{r=0}^{n}\beta_{j,r}\,\alpha^{r}\qquad\text{with}\qquad \beta_{j,n} \;=\; \frac{1}{\omega'(h_{j}(n))}.
\end{equation}
Substituting into $T_{n}(x;\alpha) = \sum_{j=0}^{n} L_{j}(\alpha)\,e^{i h_{j}(n) x}$ and rearranging by powers of $\alpha$,
\begin{equation}\label{eq:Tn_alpha_poly}
   T_{n}(x;\alpha) \;=\; \sum_{r=0}^{n} \alpha^{r}\,\Phi_{r}(x)\qquad\text{with}\qquad \Phi_{r}(x) \;:=\; \sum_{j=0}^{n}\beta_{j,r}\,e^{i\,h_{j}(n)\,x}.
\end{equation}
The leading coefficient (in $\alpha$) is the function
\begin{equation}\label{eq:Phi_def}
   \Phi(x) \;:=\; \Phi_{n}(x) \;=\; \sum_{j=0}^{n}\frac{1}{\omega'(h_{j}(n))}\,e^{i\,h_{j}(n)\,x},
\end{equation}
a finite trigonometric polynomial in $x$.
The expression~\eqref{eq:Phi_def} for $\Phi(x)$ is precisely the $n$-th \emph{divided difference} of the function $\xi\mapsto e^{i\xi x}$ at the nodes $h_{0}(n),\dots,h_{n}(n)$, by the standard formula
\[
   f[h_{0},\dots,h_{n}] \;=\; \sum_{j=0}^{n}\frac{f(h_{j})}{\omega'(h_{j})}.
\]
Applying this to $f(\xi) = e^{i\xi x}$,
\[
   \Phi(x) \;=\; \bigl[\,\xi\mapsto e^{i\xi x}\,\bigr]\bigl[h_{0}(n),\dots,h_{n}(n)\bigr].
\]
By the Hermite--Genocchi formula (cf.~Lemma~\ref{ErroLagrange} in Section~\ref{sec:lagrange}; the same identity used to bound the interpolation error),
\begin{equation}\label{eq:Phi_HG}
   \Phi(x) \;=\; \int_{\Delta_{n}} (ix)^{n}\,e^{i\,x\,(s_{0}h_{0}(n)+\cdots+s_{n}h_{n}(n))}\,d\sigma \;=\; (ix)^{n}\int_{\Delta_{n}} e^{i\,x\,\langle s,h\rangle}\,d\sigma,
\end{equation}
where the integration is over the standard $n$-simplex $\Delta_{n}=\{s\in\mathbb{R}^{n+1}_{\geq 0}:\sum s_{j}=1\}$, with $\langle s,h\rangle := \sum_{j=0}^{n} s_{j}\,h_{j}(n)$, and $d\sigma$ is the standard surface measure.

From~\eqref{eq:Phi_HG} we read off two crucial facts.
\begin{enumerate}[\rm(a)]
   \item At $x = 0$: the prefactor $(ix)^{n}$ vanishes, hence $\Phi(0) = 0$.  This is consistent with part~(i): the sum $\sum_{j} L_{j}(\alpha) = 1$ is constant in $\alpha$, so the leading coefficient as a polynomial in $\alpha$ at $x=0$ is zero.
   \item For small $x\neq 0$: the integral $\int_{\Delta_{n}} e^{ix\langle s,h\rangle}\,d\sigma$ is continuous in $x$ and equals the volume of $\Delta_{n}$, which is $1/n!$, at $x=0$.  By continuity, the integral is bounded away from zero for $x$ in a small neighbourhood of the origin.  Hence
   \begin{equation}\label{eq:Phi_near_zero}
      \Phi(x) \;\sim\; \frac{(ix)^{n}}{n!}\qquad\text{as }x\to 0,
   \end{equation}
   which is non-zero for every $x\neq 0$ in a punctured neighbourhood of the origin.
   \item For general $x\in\mathbb{R}$: the function $\Phi(x)$ is a non-zero analytic function of $x$ (by linear independence of distinct exponentials with non-zero coefficients $1/\omega'(h_{j}(n))$).  Hence its zero set is discrete in $\mathbb{R}$.
\end{enumerate}
Define
\begin{equation}\label{eq:Z_def}
   \mathcal{Z}_{n} \;:=\; \bigl\{\,x\in\mathbb{R}\;:\;\Phi(x) = 0\,\bigr\},
\end{equation}
a discrete subset of $\mathbb{R}$ containing $0$. Substituting $\alpha = \alpha_{m} := b^{m}\pi$ into~\eqref{eq:Tn_alpha_poly},
\begin{equation}\label{eq:Tn_at_alpha_m}
   T_{n}(x;\alpha_{m}) \;=\; \alpha_{m}^{n}\,\Phi(x) \;+\; \sum_{r=0}^{n-1}\alpha_{m}^{r}\,\Phi_{r}(x) \;=\; (b^{m}\pi)^{n}\,\Phi(x)\bigl[1 + O(b^{-m})\bigr],
\end{equation}
the $O(b^{-m})$ being uniform in $x$ on any compact set $|x|\leq M$ where the $\Phi_{r}(x)$ are bounded.
The general term of the series is $u_{m}(x) := a^{m}\,T_{n}(x;\alpha_{m})$, hence
\begin{equation}\label{eq:um_asymp}
   u_{m}(x) \;=\; (ab^{n})^{m}\,\pi^{n}\,\Phi(x)\bigl[1 + O(b^{-m})\bigr]\qquad\text{as }m\to\infty.
\end{equation}
Since $b>1$ and $ab>1$, we have
\[
   ab^{n} \;=\; ab\cdot b^{n-1} \;\geq\; ab\cdot 1 \;>\; 1\qquad\text{for every }n\geq 1,
\]
hence $(ab^{n})^{m}\to\infty$ as $m\to\infty$.
For every $x\in[-M,M]\setminus\mathcal{Z}_{n}$, $\Phi(x)\neq 0$, so by~\eqref{eq:um_asymp},
\[
   |u_{m}(x)| \;=\; (ab^{n})^{m}\,\pi^{n}\,|\Phi(x)|\,\bigl(1 + O(b^{-m})\bigr) \;\xrightarrow[m\to\infty]{}\; +\infty.
\]
In particular $u_{m}(x)\not\to 0$, so the series $\sum_{m=0}^{\infty} u_{m}(x)$ does not converge.  Moreover, since $|u_{m}(x)|$ grows exponentially with $m$ while the partial sums $\mathcal{W}_{N,n}(x) = \sum_{m=0}^{N} u_{m}(x)$ are dominated in modulus by their largest term up to a bounded factor, we conclude
\[
   |\mathcal{W}_{N,n}(x)| \;\xrightarrow[N\to\infty]{}\; +\infty\qquad\text{for every }x\in[-M,M]\setminus\mathcal{Z}_{n}.
\]
This proves~(ii) and completes the proof.
\end{proof}

\subsection{Convergence of the iterated limit}

\begin{theorem}[Convergence of the Iterated Limit]\label{thm:iterated}
Let $\mathcal{W}_{N, n}(x)$ be the superoscillating approximation of the Weierstrass function, where $0 < a < 1$ and $b > 1$.  Then, for any $x \in \mathbb{R}$ and any $M\ge|x|$,
\begin{equation}\label{eq:iterated_limit}
    \lim_{N \to \infty} \left( \lim_{n \to \infty} \mathcal{W}_{N, n}(x) \right) = W(x) = \sum_{m=0}^{\infty} a^m e^{i b^m \pi x}.
\end{equation}
\end{theorem}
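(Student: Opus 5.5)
The plan is to evaluate the inner limit exactly, using Theorem~\ref{thm:global_approx}, and then treat the outer limit as convergence of a geometric-type series. Fix $x\in\mathbb{R}$ and take $M\ge|x|$ (for instance $M=|x|$, or $M=1$ if $x=0$). Fix also $N\in\mathbb{N}$. By \eqref{eq:weierstrass_quantitative},
\[
   \bigl|W_{N}(x)-\mathcal{W}_{N,n}(x)\bigr|\;\le\;\frac{1}{1-a}\cdot\frac{K_{N}^{n+1}}{(n+1)!},\qquad K_{N}=M(b^{N}\pi+1).
\]
This bound tends to $0$ as $n\to\infty$, because $K_N$ does not depend on $n$ (ratio test, as in the proof of Lemma~\ref{ErroLagrange}). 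Hence the inner limit exists and equals $W_{N}(x)$ for every $N$.

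One subtlety is that Theorem~\ref{thm:global_approx} is stated under the standing hypotheses of Definition~\ref{def:trunc_W} (odd $b$, $ab>1+3\pi/2$), whereas here only $0<a<1$ and $b>1$ are assumed. I would check that the proof used only $0<a<1$, $\alpha_m\ge\pi>1$ and the monotonicity of $\alpha_m$ in $m$. All three hold whenever $b>1$, so the inner step remains valid in this setting. If one prefers not to rely on that check, one can bound each term separately by Lemma~\ref{ErroLagrange} with $K=M(b^m\pi+1)$; this needs no monotonicity at all, since $N$ is finite.

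It then remains to show that $\lim_{N\to\infty}W_{N}(x)=W(x)$. Since $|a^{m}e^{ib^{m}\pi x}|=a^{m}$ and $0<a<1$, the series defining $W$ converges absolutely by comparison with the geometric series. Its tail satisfies
\[
   |W(x)-W_{N}(x)|\le \frac{a^{N+1}}{1-a}\longrightarrow 0,
\]
which is uniform in $x$. This gives \eqref{eq:iterated_limit}.

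There is no real obstacle here. The only point needing care is justifying that the inner limit exists for each fixed $N$, which follows because $K_N$ is independent of $n$. It is worth noting, as a contrast with Theorem~\ref{thm:divergence}, that this step fails in the opposite order, where $K_N$ blows up before $n$ is sent to infinity.
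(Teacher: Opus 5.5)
Your proof is correct and follows essentially the same route as the paper. The paper applies Lemma~\ref{ErroLagrange} termwise for each $m\le N$ and passes the limit through the finite sum to get $\lim_{n\to\infty}\mathcal{W}_{N,n}(x)=W_N(x)$, which is exactly your fallback option; it then lets $N\to\infty$ using the geometric majorant $\sum a^m$, as you do. Routing the inner step through \eqref{eq:weierstrass_quantitative}, with your check that only $0<a<1$ and $b>1$ are needed, is just a repackaging of the same estimate.
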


\begin{proof}

For each fixed $N$ and each fixed $m\in\{0,\dots,N\}$, Lemma~\ref{ErroLagrange} gives
\[
   \lim_{n\to\infty} T_n(x;\alpha_m) = e^{i\alpha_m x}
   \quad\text{uniformly on }|x|\le M.
\]
Since the sum over $m$ in $\mathcal{W}_{N,n}(x) = \sum_{m=0}^N a^m T_n(x;\alpha_m)$ is finite (with $N+1$ terms), we may pass the limit inside the sum:
\[
   \lim_{n\to\infty}\mathcal{W}_{N,n}(x)
   = \sum_{m=0}^N a^m \lim_{n\to\infty}T_n(x;\alpha_m)
   = \sum_{m=0}^N a^m e^{i\alpha_m x}
   = W_N(x).
\]
The truncated Weierstrass function $W_N(x)$ is a partial sum of the series defining $W(x)$, and since $|a^m e^{i\alpha_m x}| = a^m$ with $\sum_{m=0}^\infty a^m = 1/(1-a) < \infty$, the series converges absolutely and uniformly on all of~$\mathbb{R}$.  Therefore
\[
   \lim_{N\to\infty}W_N(x) = W(x)
\]
for every $x\in\mathbb{R}$.

\end{proof}

\begin{remark}[Non-Commutativity of Limits]\label{rem:non_commute}
The order of limits is not interchangeable. By Theorem~\ref{thm:divergence}, the series $\lim_{N \to \infty} \mathcal{W}_{N, n}(x)$ diverges for any fixed~$n$ and any $x \neq 0$. Therefore
\[
    \lim_{N \to \infty} \lim_{n \to \infty} \mathcal{W}_{N, n}(x) \neq \lim_{n \to \infty} \lim_{N \to \infty} \mathcal{W}_{N, n}(x).
\]
This non-commutativity reflects the fundamental tension between the exponential growth of the Lagrange coefficients and the geometric decay of the Weierstrass weights.  It is a consequence of the failure of uniform convergence in~$n$ as $N\to\infty$: the error bound in Theorem~\ref{thm:global_approx} depends on~$N$ through the constant $\mathcal{S}_{N,n}$, which grows with~$N$ for any fixed~$n$.
\end{remark}

\subsection{Joint convergence}

\begin{theorem}[Joint Convergence] \label{thm:joint_conv}
Let $M > 0$, $0 < a < 1$, and $b > 1$ be an odd integer. Let $W(x) = \sum_{m=0}^{\infty} a^m e^{i b^m \pi x}$ be the Weierstrass function.
Consider the superoscillating approximations $\mathcal{W}_{N,n}(x)$ defined in~\eqref{eq:W_Nn}.
If the sequence of integers $\{n_N\}_{N \in \mathbb{N}}$ satisfies the growth condition
\begin{equation} \label{eq:growth_condition}
    \lim_{N \to \infty} \frac{b^N}{n_N} = 0,
\end{equation}
then $\mathcal{W}_{N, n_N}(x)$ converges uniformly to $W(x)$ on the interval $[-M, M]$:
\begin{equation}\label{eq:joint_conv_result}
    \lim_{N \to \infty} \sup_{|x| \le M} \left| \mathcal{W}_{N, n_N}(x) - W(x) \right| = 0.
\end{equation}
\end{theorem}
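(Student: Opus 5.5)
The plan is to split the error by the triangle inequality into a superoscillation part and a truncation part:
\[
   \sup_{|x|\le M}\bigl|\mathcal{W}_{N,n_N}(x)-W(x)\bigr| \;\le\; \sup_{|x|\le M}\bigl|\mathcal{W}_{N,n_N}(x)-W_N(x)\bigr| \;+\; \sup_{x\in\mathbb{R}}\bigl|W_N(x)-W(x)\bigr|.
\]
The second term is a tail of the geometric series: $|W_N(x)-W(x)|\le\sum_{m>N}a^m=a^{N+1}/(1-a)\to 0$, uniformly in $x$, exactly as in the proof of Theorem~\ref{thm:iterated}. The whole problem is therefore the first term.

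For the first term I will use the quantitative bound \eqref{eq:weierstrass_quantitative} of Theorem~\ref{thm:global_approx} with $n=n_N$:
\[
   \sup_{|x|\le M}\bigl|W_N(x)-\mathcal{W}_{N,n_N}(x)\bigr| \;\le\; \frac{1}{1-a}\cdot\frac{K_N^{n_N+1}}{(n_N+1)!},\qquad K_N=M(b^N\pi+1).
\]
It then suffices to show that $K_N^{n_N+1}/(n_N+1)!\to 0$ as $N\to\infty$. This is the main step, since here $K_N$ and $n_N$ grow together. The ratio-test argument used earlier no longer applies directly, because it needs $K$ fixed.

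I will apply the elementary inequality $k!\ge (k/e)^k$ with $k=n_N+1$, which gives
\[
   \frac{K_N^{k}}{k!}\;\le\;\Bigl(\frac{eK_N}{k}\Bigr)^{k}.
\]
Now
\[
   \frac{eK_N}{n_N+1}\;\le\; eM\pi\,\frac{b^N}{n_N}+\frac{eM}{n_N}.
\]
By \eqref{eq:growth_condition} the first summand tends to $0$. Since $b^N\to\infty$, the same condition forces $n_N\to\infty$, so the second summand also tends to $0$. Hence for $N$ large we have $eK_N/(n_N+1)\le 1/2$, and so $K_N^{k}/k!\le 2^{-(n_N+1)}\to 0$.

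Combining the two estimates gives \eqref{eq:joint_conv_result}. The rate of convergence is independent of $a$, apart from the harmless factor $1/(1-a)$. I expect no obstacle beyond keeping the Stirling-type bound correctly oriented. The only subtle point is that the growth condition must beat $K_N$, which scales like $b^N$ up to the constant $M\pi$. This is exactly what \eqref{eq:growth_condition} provides, and it works for every fixed $M$ because the limit in \eqref{eq:growth_condition} is $0$ rather than merely small.
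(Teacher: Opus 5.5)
Your proposal is correct and follows essentially the same route as the paper: the same split into truncation and superoscillation errors, the same geometric tail bound $a^{N+1}/(1-a)$, and the same estimate $k!\ge (k/e)^k$, which gives a bound of order $2^{-(n_N+1)}$ for large $N$. The only cosmetic difference is that you reuse the constant $K_N=M(b^N\pi+1)$ from Theorem~\ref{thm:global_approx} and absorb the $+1$ by noting explicitly that $n_N\to\infty$, whereas the paper re-derives the per-term bound with the cruder constant $2Mb^N\pi$.
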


\begin{proof}
Let $x \in [-M, M]$. We decompose the total error using the triangle inequality:
\begin{equation}\label{eq:total_error}
    |W(x) - \mathcal{W}_{N, n_N}(x)| \leq \underbrace{|W(x) - W_N(x)|}_{\text{(I): truncation error}} + \underbrace{|W_N(x) - \mathcal{W}_{N, n_N}(x)|}_{\text{(II): superoscillation error}},
\end{equation}
where $W_N(x) = \sum_{m=0}^N a^m e^{i b^m \pi x}$. For the first term, since $|e^{i b^m \pi x}| = 1$ and $0 < a < 1$, the tail of the Weierstrass series is bounded by a geometric series:
\begin{equation} \label{eq:tail}
    |W(x) - W_N(x)| = \left| \sum_{m=N+1}^{\infty} a^m e^{i b^m \pi x} \right| \leq \sum_{m=N+1}^{\infty} a^m = \frac{a^{N+1}}{1-a}.
\end{equation}
This bound is independent of~$x$, so the convergence is uniform.  As $N \to \infty$, this term vanishes exponentially.

Then, for the second term, applying the triangle inequality and the Lagrange interpolation remainder (as in Theorem~\ref{thm:global_approx}):
\begin{equation}\label{eq:term_II_step1}
    |W_N(x) - \mathcal{W}_{N, n_N}(x)| \leq \sum_{m=0}^N a^m\, |e^{i b^m \pi x} - T_{n_N}(x; b^m \pi)|.
\end{equation}
For each term, the interpolation remainder gives
\[
    |e^{i b^m \pi x} - T_{n_N}(x; b^m \pi)| \leq \frac{M^{n_N+1}}{(n_N+1)!}\, \prod_{j=0}^{n_N} |b^m \pi - h_j(n_N)|.
\]
Since $h_j(n_N) \in [-1, 1]$ and $b^m \pi \geq \pi > 1$, we have $|b^m \pi - h_j| \leq b^m \pi + 1 \leq 2\,b^m \pi$ for every~$j$.  Hence
\[
    \prod_{j=0}^{n_N} |b^m \pi - h_j(n_N)| \leq (2\,b^m \pi)^{n_N+1} \leq (2\,b^N \pi)^{n_N+1}
\]
for all $m\le N$.  Substituting back:
\begin{equation}\label{eq:term_II_bound}
    |W_N(x) - \mathcal{W}_{N, n_N}(x)|
    \leq \sum_{m=0}^N a^m \cdot \frac{(2\,M\,b^N \pi)^{n_N+1}}{(n_N+1)!}
    \leq \frac{1}{1-a}\cdot\frac{(2\,M\,b^N \pi)^{n_N+1}}{(n_N+1)!}.
\end{equation}

\medskip
Set $R_N := 2\,M\,b^N\pi$.  By the growth condition~\eqref{eq:growth_condition}, we have $R_N/n_N \to 0$ as $N\to\infty$.  A standard consequence of Stirling's approximation is that if $R/n\to 0$, then
\[
   \frac{R^{n+1}}{(n+1)!} \leq \left(\frac{eR}{n+1}\right)^{n+1} \;\longrightarrow\; 0
   \quad\text{as }n\to\infty.
\]

More precisely, for $N$ large enough that $eR_N/(n_N+1) < \frac{1}{2}$ we have
\[
   \frac{R_N^{n_N+1}}{(n_N+1)!}
   \leq \left(\frac{eR_N}{n_N+1}\right)^{n_N+1}
   \leq \left(\frac{1}{2}\right)^{n_N+1}
   \;\longrightarrow\; 0
\]
as $N \to \infty$, giving super-exponential decay.
\medskip
Finally, combining~\eqref{eq:total_error}, \eqref{eq:tail}, and~\eqref{eq:term_II_bound}:
\begin{equation}\label{eq:final_combined}
    \sup_{|x|\le M}|W(x) - \mathcal{W}_{N, n_N}(x)|
    \leq \frac{a^{N+1}}{1-a} + \frac{1}{1-a}\cdot\frac{(2Mb^N\pi)^{n_N+1}}{(n_N+1)!}.
\end{equation}
Both terms converge to~$0$ as $N\to\infty$: the first by $0<a<1$, the second by the analysis above.  Since the bound is independent of~$x$, the convergence is uniform on $[-M,M]$.
\end{proof}

\section{Concluding remarks: Interpretation of the Divergence Wall}

The results of this section are summarized by the following conceptual picture.

The term Divergence Wall introduced in \cite{FRACTALS}  refers to the critical threshold
in the $(N,n)$-plane beyond which the superoscillatory coefficients grow
so rapidly that they overwhelm the geometric decay~$a^m$ of the Weierstrass series.

Fixed-$n$ regime (below the wall): If $n$ is held constant while $N \to \infty$, the Lagrange coefficients for the $m=N$ term grow as $(b^N)^n$, so the general term behaves as $(ab^n)^N$.  Since $ab>1$ implies $ab^n>1$, the series diverges for $x\neq 0$.

The growth condition (above the wall):
    The condition $R_N/n_N = 2Mb^N\pi/n_N \to 0$ ensures that $n_N$ grows faster than $b^N$, so that the factorial growth of $(n_N+1)!$ in the interpolation remainder dominates the exponential growth of the superoscillatory amplitudes.

Fractal reconstruction:
    The joint convergence theorem shows that while the Weierstrass function is nowhere differentiable (and hence requires arbitrarily high frequencies), it can nevertheless be viewed as a uniform limit of smooth, band-limited superoscillating functions---provided the bandwidth parameter~$n$ increases exponentially to outpace the Divergence Wall.

\medskip\medskip
{\bf Conflict of Interest}. The authors declare that they have no competing interests regarding the publication of this paper.

\medskip
{\bf Author contributions}. All authors contributed equally to the study, read and approved the final version of the submitted manuscript.

\medskip
{\bf Availability of data}. There are no data associated with the research in this paper.


\begin{thebibliography}{99}

\bibitem{AAV} Y. Aharonov, D.\,Z. Albert, and L. Vaidman, How the result of a measurement of a component of the spin of a Spin-$\frac{1}{2}$ particle can turn out to be 100, \textit{Phys. Rev. Lett.} \textbf{60} (1988), 1351--1354.

\bibitem{FRACTALS}
F. Colombo, I. Sabadini, D.\,C. Struppa,
On the approximation of Weierstrass function via superoscillations.
preprint. arXiv:2603.05580.

\bibitem{ACSSST21} Y. Aharonov, F. Colombo, I. Sabadini, T. Shushi, D.\,C. Struppa, and J. Tollaksen, A new method to generate superoscillating functions and supershifts, \textit{Proc. Roy. Soc. A} \textbf{477} (2021), 20210020.

\bibitem{colombo2017}
Y. Aharonov, F. Colombo, I. Sabadini, D.\,C. Struppa, and J. Tollaksen,
The Mathematics of Superoscillations,
\textit{Memoirs of the American Mathematical Society}, Vol. 247, No. 1174, 2017.

\bibitem{BOREL} J. Behrndt, F. Colombo, P. Schlosser, and D.\,C. Struppa, Integral representation of superoscillations via complex Borel measures and their convergence, \textit{Trans. Amer. Math. Soc.} \textbf{376} (2023), no.~9, 6315--6340.

\bibitem{berry1994}
M.\,V. Berry,
Faster than Fourier,
in \textit{Quantum Coherence and Reality; in celebration of the 60th Birthday of Yakir Aharonov} (J.\,S. Anandan and J.\,L. Safko, eds.),
World Scientific, Singapore, 1994, pp.~55--65.

\bibitem{berry1980}
M.\,V. Berry and Z.\,V. Lewis, On the Weierstrass-Mandelbrot fractal function, \textit{Proc. R. Soc. Lond. A} \textbf{370} (1980), 459--484.

\bibitem{berry2016fractals}
M.\,V. Berry and S. Morley-Short,
Representing fractals by superoscillations,
\textit{J. Phys. A: Math. Theor.} \textbf{49} (2016), 065203.

\bibitem{berry2019roadmap}
M.\,V. Berry et al.,
Roadmap on superoscillations,
\textit{J. Opt.} \textbf{21} (2019), 053002 (35 pp).

\bibitem{kempf2004}
L. Chojnacki and A. Kempf, New methods for creating superoscillations, \textit{J. Phys. A: Math. Theor.} \textbf{49} (2016), 505203.

\bibitem{Ahar-bohm-JMP}
F. Colombo, E. Pozzi, I. Sabadini, and B.\,D. Wick,
Aharonov--Bohm effect and superoscillations, \textit{J. Math. Phys.} \textbf{66} (2025), no.~7, Paper No. 073501, 16~pp.

\bibitem{jordan} A.\,N. Jordan, Superresolution using supergrowth and intensity contrast imaging, \textit{Quantum Stud.: Math. Found.} \textbf{7} (2020), no.~3, 285--292.

\bibitem{jordan2} A.\,N. Jordan, Y. Aharonov, D.\,C. Struppa, F. Colombo, I. Sabadini, T. Shushi,
J. Tollaksen, J.\,C. Howell, and A.\,N. Vamivakas,
Super-phenomena in arbitrary quantum observables,
\textit{Phys. Rev. A} \textbf{110} (2024), 012206.

\bibitem{kempf4}
A. Kempf,
Four aspects of superoscillations,
\textit{Quantum Stud. Math. Found.} \textbf{5} (2018), 477--484.

\bibitem{Lee} D.\,G. Lee and P.\,J.\,S.\,G. Ferreira, Direct construction of superoscillations, \textit{IEEE Trans. Signal Process.} \textbf{62} (2014), 3125--3134.

\bibitem{mandelbrot1982}
B.\,B. Mandelbrot, \textit{The Fractal Geometry of Nature}, Freeman, San Francisco, CA, 1982.

\bibitem{Pozzi}
E. Pozzi and B.\,D. Wick, Persistence of superoscillations under the Schr\"odinger equation,
\textit{Evol. Equ. Control Theory} \textbf{11} (2022), no.~3, 869--894.

\bibitem{weierstrass} K. Weierstrass, \"Uber continuirliche Functionen eines reellen Arguments, die f\"ur keinen Werth des letzteren einen bestimmten Differentialquotienten besitzen, in: \textit{Mathematische Werke von Karl Weierstrass}, vol.~2, Mayer \& M\"uller, Berlin, 1895, pp.~71--74.

\end{thebibliography}
\end{document}